\theoremstyle{plain}
\newtheorem{theorem}{Theorem}[section]
\newtheorem{proposition}[theorem]{Proposition}
\newtheorem{lemma}[theorem]{Lemma}
\theoremstyle{definition}
\newcommand{\R}{\mathbb{R}}
\newcommand{\RP}{\mathbb{RP}^2}
\newcommand{\PGL}{\mathrm{PGL}_3(\R)}
\newcommand{\SL}{\mathrm{SL}_3(\R)}
\title[The Goldman and Fock-Goncharov coordinates]
 {The Goldman and Fock-Goncharov coordinates\\
 for convex projective structures on surfaces}
\author{Francis Bonahon}
\address{Department of Mathematics, University of Southern California, Los Angeles CA 90089-2532, U.S.A.}
\email{fbonahon@math.usc.edu}
\author{Inkang Kim}
\address{School of Mathematics, Korean Institute for Advanced Study, Heogiro 85, Dongdaemen-gu, Seoul, 130-722, Korea}
\email{inkang@kias.re.kr}
\subjclass{51M10, 57S25}
\keywords{Real projective structures, surface, Goldman coordinates, Fock-Goncharov coordinates}
\thanks{This research is based upon work supported by the U.S. National Science Foundation under Grant 0932078000, while both authors were in residence at the Mathematical Sciences Research Institute in Berkeley, California, in the Spring 2015 semester. It was also partially supported by NSF Grants DMS-1105402 and DMS-1406559, Grant NRF-2014R1A2A2A01005574 from the National Research Foundation of Korea, as well as by a Fellowship from the Simons Foundation (Grant 301050).}
\dedicatory{Dedicated to Bill Goldman,\\
on the occasion of his 60-th birthday}
\date{\today}
\begin{document}

\begin{abstract} Let $\mathfrak P(S)$ be the space of convex projective structures on a surface $S$ with negative Euler characteristic. Goldman and Bonahon-Dreyer constructed two different sets of global coordinates for $\mathfrak P(S)$, both associated to a pair of pants decomposition of the surface~$S$. The article explicitly describes the coordinate change between these two parametrizations.  Most of the arguments are concentrated in the case where $S$ is a pair of pants, in which case the Bonahon-Dreyer coordinates are actually due to Fock-Goncharov. 
\end{abstract}

\maketitle

Let $S$ be a compact surface, possibly with boundary. A \emph{projective structure} on this surface locally models $S$ over the real projective plane $\RP$, in such a way that the boundary  locally corresponds to a straight line in $\RP$ and that coordinate changes are induced by projective transformations of $\RP$.  Recall that the group of projective transformations of $\RP$ is $\PGL=\SL$. A projective structure $\pi$ on $S$ lifts to a projective structure on the universal cover $\widetilde S$, for which there exists a  projective map $\mathrm{dev}_\pi \colon \widetilde S \to \RP$ which is equivariant with respect to a group homomorphism $\rho_\pi \colon \pi_1(S) \to \SL$. This \emph{developing map} $\mathrm{dev}_\pi$ is unique up to composition with the projective map $\RP \to \RP$ induced by an element  $A\in \SL$; its \emph{monodromy homomorphism} $\rho_\pi$ is then determined up to conjugation by the same $A\in \SL$. The projective structure is \emph{convex} if the developing map $\mathrm{dev}_\pi$ induces a homeomorphism between $\widetilde S$ and a convex domain $\Omega$ in $\RP$.  

The modern theory of convex projective structures on surfaces received a great boost from two very influential articles of Bill Goldman  \cite{Goldman, ChoiGold}, the second one in collaboration with Suhyoung Choi.
In particular, when the surface $S$ has negative Euler characteristic $\chi(S)$, Goldman \cite{Goldman} considers the space $\mathfrak P(S)$ of isotopy classes of convex projective structures on $S$, and shows that $\mathfrak P(S)$ is diffeomorphic to an open cell of dimension $8|\chi(S)|$. He proves this result by constructing rather explicit coordinates for $\mathfrak P(S)$, associated to a pair of pants decomposition of the surface $S$.

The purpose of the current paper is to compare Goldman's parametrization of $\mathfrak P(S)$ to another parametrization more recently developed by Dreyer and the first author in \cite{BonDre1}. More precisely, we give an explicit correspondence between Goldman's coordinates for $\mathfrak P(S)$ and the coordinates of \cite{BonDre1}, when these two sets of coordinates are associated to the same pair of pants decomposition of the surface $S$. The existence of such explicit coordinate changes is of course not surprising, and similar computations can be found in \cite[\S4.7]{AleChoi}.  However the authors thought that it would be useful to have them available in print. See for instance \cite{Zhang1, Zhang2} for recent work that uses the two points of view. See also \cite[\S5]{Lawton} for a correspondence, in the case of the pair of pants,  between the Goldman coordinates and the trace coordinates developed in  \cite{Lawton}.

\section{The Goldman parameters for the pair of pants}
\label{sect:Goldman}

We first consider the elementary blocks of Goldman's parametrization, namely the case where $S$ is  a pair of pants, with boundary components $A_1$, $A_2$, $A_3$. These boundary components are called $A$, $B$, $C$ in \cite{Goldman}, but our convention seems a little more reader-friendly for the subsequent computations. For compatibility with \cite{Goldman}, we orient the $A_i$ by the opposite of the boundary orientation induced by the orientation of $S$.

Goldman associates eight positive real parameters $\lambda_1$, $\tau_1$, $\lambda_2$, $\tau_2$, $\lambda_3$, $\tau_3$, $s$, $t>0$ to each convex projective structure on the pair of pants $S$, and shows that the resulting map $\mathfrak P(S) \to \R^8$ induces a diffeomorphism between $\mathfrak P(S)$ and the open subset $U\subset \R^8$ defined by  the inequalities 
$$2\lambda_i^{-\frac12} < \tau_i < \lambda_i + \lambda_i^{-2}$$ 
for every $i=1$, $2$, $3$. 
We now describe these parameters. 

The parameters $\lambda_i$ and $\tau_i$ are associated to the $i$--th boundary component $A_i$ of the pair of pants  $S$, and more precisely to the monodromy $\rho_\pi(A_i)\in \SL$ defined by the projective structure $\pi \in \mathfrak P(S)$ considered. Goldman shows that the eigenvalues $0<\lambda_i < \mu_i < \nu_i$ of $\rho_\pi(A_i)\in \SL$ are positive real and distinct. The invariant $\lambda_i$ is then the smallest one of these eigenvalues, while $\tau_i = \mu_i + \nu_i$ is the sum of the other two. 

The construction of the remaining parameters $s$ and $t$, called the \emph{internal parameters} of the projective structure $\pi\in \mathfrak P(S)$,  is much more elaborate.

\begin{figure}[htbp]
\vskip 5pt
\SetLabels
( .19* .35) $\Sigma $ \\
( .2* 1.05) $ p_2$ \\
(.4 *-.05 ) $p_3 $ \\
(-.02 * -.05) $p_1 $ \\
( .2* -.12) $ B_2$ \\
( .05* .5) $B_3 $ \\
( .32* .5) $B_1 $ \\
(.63 *.7 ) $T_- $ \\
( .8* .35) $T_+$ \\
( .8* -.12) $ B_2$ \\
( .67*.5 ) $B_3 $ \\
( .49* .5) $B_2 $ \\
(.63 * 1.03) $ B_1$ \\
( .94* .5) $B_1 $ \\
\endSetLabels
\centerline{\AffixLabels{\includegraphics{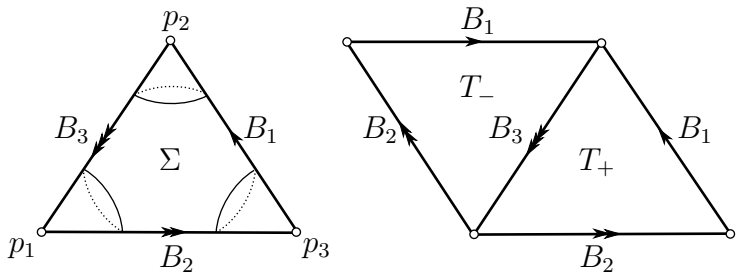}}}
\vskip 5pt
\caption{The three-puncture sphere $\Sigma$}
\label{fig:Pants}
\end{figure}

Identify the interior $S-\partial S$ to a three-puncture sphere $\Sigma = S^2 - \{p_1, p_2, p_3\}$ as in Figure~\ref{fig:Pants}, and decompose $\Sigma$ into two ideal triangles $T_+$ and $T_-$ meeting along disjoint lines $B_1$, $B_2$ and $B_3$ going from puncture to puncture. Considering indices modulo 3, we choose the indexing so that the puncture $p_i$ of $S-\partial S$ corresponds to the component $A_i$ of $\partial S$, the line $B_i$ goes from the puncture $p_{i-1}$ to $p_{i+1}$, and the punctures $p_1$, $p_2$, $p_3$ occur in this order as one goes clockwise around the boundary of $T_+$ (and counterclockwise around the boundary of $T_-$). See Figure~\ref{fig:Pants}. The lines $B_1$, $B_2$, $B_3$ are called $a$, $b$, $c$ in \cite{Goldman}. 

The triangles $T_+$ and $T_-$ lift to a family of ideal triangles that tessellate the universal cover $\widetilde \Sigma \subset \widetilde S$ of the punctured sphere $\Sigma \subset S$. Given a projective structure $\pi \in \mathfrak P(S)$, Goldman isotops this projective structure on $\Sigma$ so that the developing map $\mathrm{dev} \colon \widetilde \Sigma \to \RP$ sends each lift $\widetilde T \subset \widetilde\Sigma$ of $T_{\pm}$ to a geometric triangle $\Delta$ (delimited by three straight line segments) in $\RP$, minus the vertices of $\Delta$. By construction, each vertex $v$ of such a triangle $\Delta= \mathrm{dev}(\widetilde T) \subset \RP$ is invariant under the action  $\rho_\pi(A_i) \in \SL$ for some element $A_i\in \pi_1(S)$ of the conjugacy class determined by  a component $A_i $ of the boundary $\partial S$. Goldman arranges in addition that this  vertex $v\in \RP$ is the repelling fixed point of $\rho_\pi(A_i)$, corresponding to the eigenspace associated to the smallest eigenvalue $\lambda_i$ of $\rho_\pi(A_i)$.

In the universal cover $\widetilde \Sigma \subset \widetilde S$, choose a component $\widetilde T_+$ of the preimage  of the ideal triangle $T_+$, and let $\widetilde T_1$, $\widetilde T_2$, $\widetilde T_3$ be the components of the preimage of $T_-$ that touch $\widetilde T_+$ along the sides that  correspond to $B_1$, $B_2$, $B_3$, respectively. Let $\Delta_+ = \mathrm{dev}(\widetilde T_+)$, $\Delta_1 = \mathrm{dev}(\widetilde T_1)$, $\Delta_2 = \mathrm{dev}(\widetilde T_2)$, $\Delta_3 = \mathrm{dev}(\widetilde T_3)$ be the corresponding triangles in $\RP$.  We already observed that, for the monodromy $\rho_\pi \colon \pi_1(S) \to \SL$, each vertex of $\Delta_+$ is the repelling fixed point of some $\rho_\pi(A_i) \in \SL$ for some element $A_i$ represented  by a  component $A_i$ of $\partial S$. Looking at the action of these fundamental group elements $A_i \in \pi_1(S)$ on $\widetilde S$ and on the triangles $\widetilde T$ of the preimage of $T_\pm$, we see that each $\rho_\pi(A_i) \in \SL$ sends the triangle $\Delta_{i+1}$ to $\Delta_{i-1}$, considering indices modulo 3. See Figure~\ref{fig:Triangles1}.

\begin{figure}[htbp]
\vskip 10pt
\SetLabels
( .49* .82) $\rho_\pi( A_2)$ \\
(.76 * .35) $\rho_\pi(A_3) $ \\
( .21* .37) $\rho_\pi( A_1)$ \\
(.53 * .16) $ \Delta_2$\\
( .16*.67 ) $\Delta_3 $\\
( .84*.71 ) $ \Delta_1$\\
( .5* .54) $ \Delta_+$\\
( .5*1.02 ) $[ 0,1,0] $ \\
( 1.07* .27) $[0,0,1 ] $\\
(-.1 * .27) $[1,0,0] $ \\
(.54 * -.05) $[a_2 ,-1 ,c_2 ] $ \\
( -.11* .72) $ [ a_3, b_3, -1]$ \\
(1.15 * .82) $[ -1,b_1 ,c_1 ] $ \\
\endSetLabels
\centerline{\AffixLabels{\includegraphics{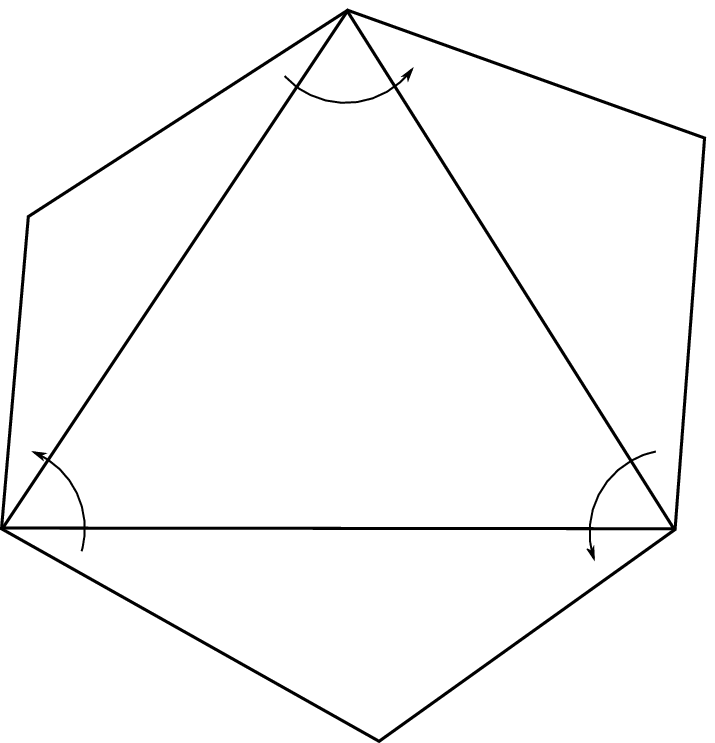}}}
\vskip 10pt
\caption{}
\label{fig:Triangles1}
\end{figure}

The developing map $\mathrm{dev}\colon \widetilde \Sigma \to \RP$ is only defined up to postcomposition with an element of $\SL$. We can therefore arrange that the vertices of the triangle $\Delta$ that are respectively fixed by $\rho_\pi(A_1)$, $\rho_\pi(A_2)$ and $\rho_\pi(A_3)$ are the points of $\RP$ with homogeneous coordinates $[1,0,0]$, $[0,1,0]$ and $[0,0,1]$, respectively. Goldman shows that, as a consequence of the convexity of the projective structure, the remaining vertices of the triangles $\Delta_1$, $\Delta_2$, $\Delta_3$ can be written as $[-1, b_1, c_1]$, $[a_2, -1, c_2]$, $[a_3, b_3, -1]$ with all $a_i$, $b_i$, $c_i>0$.

The internal parameters $s$, $t>0$ of the convex projective structure $\pi \in \mathfrak P(S)$ are then defined by the property that
$$
t= \frac{a_2b_3}{a_3}
$$
and
\begin{align*}
b_3c_2 &= 1 +  \tau_1 \sqrt{\frac{\lambda_1\lambda_3}{\lambda_2}} s + \frac{\lambda_3}{\lambda_2}s^2,\\
a_3c_1 &= 1 +  \tau_2 \sqrt{\frac{\lambda_1\lambda_2}{\lambda_3}} s + \frac{\lambda_1}{\lambda_3}s^2,\\
a_2b_1 &= 1 +  \tau_3 \sqrt{\frac{\lambda_2\lambda_3}{\lambda_1}} s + \frac{\lambda_2}{\lambda_1}s^2.
\end{align*}
The numbers $\rho_1=b_3c_2$, $\rho_2=a_3c_1$ and $\rho_3=a_2b_1$ occurring here are the crossratios of the four lines passing through each vertex of $\Delta_+$ in Figure~\ref{fig:Triangles1}, and Goldman shows that these crossratios are all greater than 1. As a consequence, $s$ is determined as the unique positive solution to any one of the three equations above.

Note that  the existence of $s$ imposes constraints between the coordinates $a_i$, $b_i$, $c_i$ and  the eigenvalue invariants $\lambda_i$ and $\tau_i$.

The definition of the parameter $s$ is rather intrinsic and symmetric. For instance, the projective structure on $S$ comes from a hyperbolic metric if and only if $s=1$ and $\tau_i =1+\lambda_i^{-1}$ for each $i=1$, $2$, $3$.   The construction of $t$ involves a symmetry break, and this parameter depends on the indexing of the boundary components of $S$.

\section{The Fock-Goncharov coordinates for the pair of pants}
\label{sect:FockGoncharov}

We now turn to the coordinates of \cite{BonDre1}, still for the pair of pants $S$. In this case, these coordinates are actually due to Fock and Goncharov \cite{FG, FG2}. They consist of two \emph{triangle invariants} $\tau_{111}(T_+)$ and $\tau_{111}(T_-)$ associated to the triangle $T_+$ and $T_-$, and two \emph{shear invariants} $\sigma_1(B_i)$ and $\sigma_2(B_i)$ associated to each of the oriented lines $B_1$, $B_2$, $B_3$. 

The Fock-Goncharov coordinates for a convex projective structure $\pi\in\mathfrak P(S)$ require that we choose, for each boundary component $A_i$ of $S$, a flag $F_i$ which is invariant under the monodromy $\rho_\pi(A_i) \in \SL$ of $\pi$. Recall that a \emph{flag} in $\R^3$ is a family $F$ of linear subspaces $0= F^{(0)}\subset  F^{(1)} \subset F^{(2)} \subset F^{(3)}=\R^3$ where each $F^{(a)}$ has dimension $a$. To be more precise since $A_i\in \pi_1(S)$ is only defined up to conjugation, we need to choose an invariant flag for the image under $\rho_\pi\colon \pi_1(S) \to \SL$ of each element of the corresponding conjugacy class $\pi_1(S)$, in a $\rho_\pi$--equivariant way with respect to the operation of conjugation by elements of $\pi_1(S)$. 

In view of Goldman's conventions, it is natural to choose for $F_i$ the \emph{unstable flag} of $\rho_\pi(A_i)$, whose line $F_i^{(1)}$ is the eigenspace of $\rho_\pi(A_i)$ corresponding to the lowest eigenvalue $\lambda_i$, and whose plane $F_i^{(2)}$ is generated by $F_i^{(1)}$ and by the eigenspace corresponding to the second lowest eigenvalue $\mu_i$ of $\rho_\pi(A_i)$. (Recall that $\rho_\pi(A_i)\in \SL$ is represented by a matrix of $\SL$ with distinct positive real eigenvalues $0 < \lambda_i < \mu_i < \nu_i$.)

In the geodesic lamination setup of \cite{BonDre1}, this means that we are considering the lines $B_1$, $B_2$, $B_3$ as spiraling to the left along the boundary components of $S$, as in Figure~\ref{fig:PantsSpiral}.

\begin{figure}[htbp]

\SetLabels
( .43* .45) $ T_+$ \\
( .7*.55 ) $T_-$ \\
(.43 *.29 ) $ B_2$ \\
( .28* .47) $ B_3$ \\
( .53* .58) $ B_1$ \\
( .5* .92) $ A_2$ \\
( .17* -.07) $ A_1$ \\
( .85*-.07 ) $ A_3$ \\
\endSetLabels
\centerline{\AffixLabels{ \includegraphics{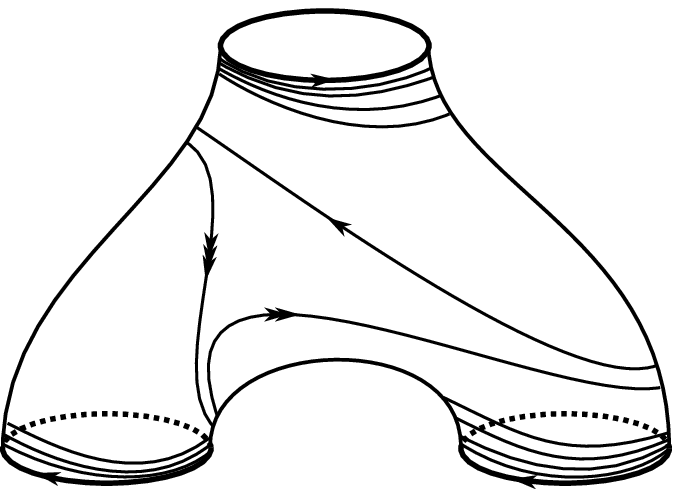}}}

\caption{}
\label{fig:PantsSpiral}
\end{figure}

Lift $T_+$ to a triangle $\widetilde T_+$ in the universal cover $\widetilde S$, and represent the boundary components of $S$ by homotopy classes $A_1$, $A_2$, $A_3\in \pi_1(S)$ that each fix one vertex of $\widetilde T_+$. For the orientation of $S$, the vertices of  $\widetilde T_+$ that are respectively fixed by $A_1$, $A_2$, $A_3\in \pi_1(S)$ occur clockwise in this order around $\widetilde T_+$. Then, if we follow the sign conventions of \cite{BonDre1} (which are the opposite of those of \cite{BonDre2}), the definition of the triangle invariant $\tau_{111}(T_+)$ is
$$
\tau_{111}(T_+) = \log
 \frac{f_1^{(2)}\wedge f_2^{(1)}}{f_2^{(1)}\wedge f_3^{(2)}}  
\frac{f_1^{(1)}\wedge f_3^{(2)}}{f_1^{(1)}\wedge f_2^{(2)}}  \frac{f_2^{(2)}\wedge f_3^{(1)}}{f_1^{(2)}\wedge f_3^{(1)}}
$$
for arbitrary non-zero elements $f_i^{(a)} \in \Lambda^a(F_i^{(a)})$, where $F_i \in \mathrm{Flag}(\R^3)$ is the unstable flag of the element $A_i \in \pi_1(S)$. Fock and Goncharov prove that this definition makes sense, as the triple ratio appearing inside of the $\log$ is positive. 

Similarly, lift $T_-$ to a triangle $\widetilde T_- \subset \widetilde S$ whose vertices are fixed by  classes $A_1'$, $A_2'$, $A_3'\in \pi_1(S)$, and let $F_i' \in \mathrm{Flag}(\R^3)$ be the unstable flag of $A_i' \in \pi_1(S)$. Then, because the vertices  respectively fixed by $A_1'$, $A_2'$, $A_3'\in \pi_1(S)$ now occur counterclockwise in this order around $\widetilde T_-$,
$$
\tau_{111}(T_-) = \log
 \frac{f_1^{\prime(2)}\wedge f_3^{\prime(1)}}{f_2^{\prime(2)}\wedge f_3^{\prime(1)}}  
\frac{f_1^{\prime(1)}\wedge f_2^{\prime(2)}}{f_1^{\prime(1)}\wedge f_3^{\prime(2)}}  \frac{f_2^{\prime(1)}\wedge f_3^{\prime(2)}}{f_1^{\prime(2)}\wedge f_2^{\prime(1)}}
$$
for an arbitrary choice of non-zero elements $f_i^{\prime(a)} \in \Lambda^a(F_i^{\prime(a)})$.

We now define the shearing invariants $\sigma_1(B_i)$ and $\sigma_2(B_i)$ along each of the three spiraling leaves $B_1$, $B_2$, $B_3$. For this, we orient $B_i$ so that it goes from $A_{i-1}$ to $A_{i+1}$, as in Figures~\ref{fig:Pants} and \ref{fig:PantsSpiral}; this orientation also coincides with the boundary orientation of the triangle $T_+$. Lift $B_i$ to a line $\widetilde B_i$ in the universal covering $\widetilde S$, and let $\widetilde T_+$ and $\widetilde T_-$ be the lifts of $T_+$ and $T_-$ that are adjacent to $\widetilde B_i$. Let $F_{i+1}$, $F_{i-1}$, $F_i$ and $F_i'\in \mathrm{Flag}(\R^3)$ be the unstable flags respectively associated to the positive endpoint of $\widetilde B_i$,  the negative endpoint of $\widetilde B_i$, the third vertex of $\widetilde T_+$, and the third vertex of $\widetilde T_-$. Then
$$
\sigma_1(B_i) = \log \left(-
\frac{f_{i+1}^{(1)} \wedge f_{i-1}^{(1)} \wedge f_i^{(1)}}
{f_{i+1}^{(1)} \wedge f_{i-1}^{(1)} \wedge f_i^{\prime(1)}}
\frac{ f_{i-1}^{(2)} \wedge f_i^{\prime(1)}}
{f_{i-1}^{(2)} \wedge f_i^{(1)}}
\right)
$$
and
$$
\sigma_2(B_i) = \log \left(-
\frac{f_{i+1}^{(1)} \wedge f_{i-1}^{(1)} \wedge f_i^{\prime(1)}}
{f_{i+1}^{(1)} \wedge f_{i-1}^{(1)} \wedge f_i^{(1)}}
\frac{ f_{i+1}^{(2)} \wedge f_i^{(1)}}
{f_{i+1}^{(2)} \wedge f_i^{\prime(1)}}
\right)
$$
with our usual conventions that $f_j^{(a)} \in \Lambda^a(F_j^{(a)})$ and $f_j^{\prime(a)} \in \Lambda^a(F_j^{\prime(a)})$. Again, this definition makes sense as Fock and Goncharov show that the quantities inside of the $\log$ are positive.

\section{The pair of pants: from the Fock-Goncharov coordinates to the Goldman coordinates}
\label{sect:PantsFromFGtoGoldman}

We now indicate, for the pair of pants $S$,  how to compute the Goldman coordinates of a projective structure $\pi\in \mathfrak P(S)$ from its Fock-Goncharov coordinates. 

We begin with the boundary invariants $\lambda_1$, $\tau_1$, $\lambda_2$, $\tau_2$, $\lambda_3$, $\tau_3$ associated to the boundary components $A_1$, $A_2$, $A_3$ of $S$.

\begin{proposition}
\label{prop:GoldmanBoudaryFromFockGoncharov}
For $i=1$, $2$, $3$, 
\begin{align*}
\lambda_i &=\mathrm e^{  { \frac13} \sigma_1(B_{i+1}) +  { \frac23} \sigma_2(B_{i+1}) + { \frac23} \sigma_1(B_{i-1}) + { \frac13} \sigma_2(B_{i-1}) +{ \frac23}\tau_{111}(T_+) + { \frac23}\tau_{111}(T_-) }
\\
\text{and }\tau_i &= \bigl( \mathrm e^{ -\sigma_1(B_{i+1}) -\sigma_2(B_{i-1})}+1  \bigr) \mathrm e^{  { \frac13} \sigma_1(B_{i+1}) -  { \frac13}\sigma_2(B_{i+1}) - { \frac13} \sigma_1(B_{i-1}) +{ \frac13}\sigma_2(B_{i-1}) -{ \frac13}\tau_{111}(T_+) -{ \frac13}\tau_{111}(T_-)  }.
\end{align*}
\end{proposition}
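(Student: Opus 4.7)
The plan is to compute the monodromy $\rho_\pi(A_i) \in \SL$ as an explicit product of matrices built from the Fock-Goncharov coordinates $\tau_{111}(T_\pm)$ and $\sigma_k(B_j)$, and then read off its three eigenvalues from an upper-triangular form adapted to the invariant flag $F_i$. The trace invariant $\tau_i = \mu_i + \nu_i$ is then the sum of the two larger diagonal entries, and the factorization in the statement of the proposition reflects the identity $\mu_i + \nu_i = \nu_i(1 + \mu_i/\nu_i)$. As a sanity check, only the shear invariants along $B_{i+1}$ and $B_{i-1}$ should appear, since these are the two edges of the ideal triangulation that spiral into the puncture $p_i$ fixed by $A_i$, while $B_i$ is opposite to $p_i$ in both $\widetilde T_+$ and $\widetilde T_-$.

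For the setup, lift $T_+$ to $\widetilde T_+ \subset \widetilde S$, with vertices $\widetilde p_1, \widetilde p_2, \widetilde p_3$ fixed respectively by $A_1, A_2, A_3 \in \pi_1(S)$, and fix the index $i$. Choose a basis of $\R^3$ adapted to the unstable flag $F_i$ so that $F_i^{(1)} = \mathrm{span}(e_1)$ and $F_i^{(2)} = \mathrm{span}(e_1, e_2)$; then $\rho_\pi(A_i)$ is automatically upper triangular in this basis, with diagonal entries $(\lambda_i, \mu_i, \nu_i)$. Using the wedge-product definitions of $\tau_{111}(T_+)$ and $\sigma_k(B_j)$, write down explicit representatives $f_j^{(a)} \in \Lambda^a(F_j^{(a)})$ for all flags attached to the three vertices of $\widetilde T_+$ and to the two lifts of $T_-$ adjacent to $\widetilde T_+$ across $\widetilde B_{i+1}$ and $\widetilde B_{i-1}$.

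For the core computation, observe that $A_i$ acts on $\widetilde S$ by fixing $\widetilde p_i$ and shifting by one period the infinite sequence of ideal triangles meeting at $\widetilde p_i$. One period consists of exiting $\widetilde T_+$ across $\widetilde B_{i+1}$, traversing one lift of $T_-$, crossing $\widetilde B_{i-1}$, and arriving at $A_i(\widetilde T_+)$. Each of these four steps propagates the flag data by an explicit transformation controlled by the corresponding Fock-Goncharov coordinate, and the resulting composition, when compared with the original flag configuration at $\widetilde T_+$, equals $\rho_\pi(A_i)$. Reading off the diagonal entries of this upper triangular matrix gives $\lambda_i$, $\mu_i$, $\nu_i$ as monomials in $e^{\sigma_k(B_{i\pm 1})}$ and $e^{\tau_{111}(T_\pm)}$; the formula for $\lambda_i$ is immediate, and the formula for $\tau_i$ follows from the factorization above together with the observation that $\mu_i/\nu_i = e^{-\sigma_1(B_{i+1}) - \sigma_2(B_{i-1})}$.

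The main obstacle is the bookkeeping of scalar normalizations. Since each of $\sigma_k(B_j)$ and $\tau_{111}(T_\pm)$ is invariant under rescaling the individual $f_j^{(a)}$, extracting an honest matrix in $\SL$ requires fixing specific representatives up front and carrying those normalizations consistently through every edge and triangle of the fundamental domain. A secondary subtlety is that the leaves $B_j$ spiral into the punctures rather than closing up, so the loop $A_i$ is not realized by a genuine closed path in the triangulation of $\Sigma$; one must argue that the two-triangle fundamental domain really computes the correct conjugacy class of $\rho_\pi(A_i)$, and that the diagonal ordering obtained from the matrix matches the convention $\lambda_i < \mu_i < \nu_i$ enforced by the choice of unstable flag.
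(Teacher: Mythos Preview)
Your plan is sound and would work, but it is a genuinely different route from the paper's. The paper does not compute $\rho_\pi(A_i)$ at all: it simply invokes Proposition~13 of \cite{BonDre1}, which already supplies
\[
\ell_1(A_i)=\log\frac{\nu_i}{\mu_i}=-\sigma_1(B_{i+1})-\sigma_2(B_{i-1}),
\qquad
\ell_2(A_i)=\log\frac{\mu_i}{\lambda_i}=-\sigma_2(B_{i+1})-\sigma_1(B_{i-1})-\tau_{111}(T_+)-\tau_{111}(T_-),
\]
and then solves the $3\times 3$ linear system for $\log\lambda_i,\log\mu_i,\log\nu_i$ using $\lambda_i\mu_i\nu_i=1$. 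Your proposal, by contrast, reconstructs the monodromy as a product of edge and triangle transition matrices and reads the eigenvalues off the diagonal; this is essentially how that cited proposition is proved in \cite{BonDre1} (and in \cite{FG}), so you are reproving the input rather than citing it. The payoff of your approach is self-containment; the cost is the normalization bookkeeping you flagged, which the paper bypasses entirely.

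One small correction: the factorization that matches the stated formula is $\tau_i=\mu_i\bigl(1+\nu_i/\mu_i\bigr)$, not $\nu_i\bigl(1+\mu_i/\nu_i\bigr)$. The exponential prefactor in the proposition equals $\mu_i$, and the term in parentheses is $1+\nu_i/\mu_i$, so the ratio you want is $\nu_i/\mu_i=\mathrm e^{-\sigma_1(B_{i+1})-\sigma_2(B_{i-1})}$, the reciprocal of what you wrote.
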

\begin{proof}
This is an immediate consequence of Proposition~13 of \cite{BonDre1} which, if $\rho_\pi(A_i) \in \SL$ is represented by a matrix of $\SL$ with eigenvalues $0 < \lambda_i < \mu_i < \nu_i$, determines $\ell_1(A_i) = \log \nu_i - \log \mu_i$ and $\ell_2(A_i) = \log \mu_i -\log \lambda_i$ in terms of the Fock-Goncharov invariants $\tau_{111}(T_{\pm})$, $\sigma_1(B_j)$, $\sigma_2(B_j)$. More precisely,
\begin{align*}
\ell_1(A_i) &= - \sigma_1(B_{i+1}) - \sigma_2(B_{i-1}) ,
\\
\ell_2(A_i) &= - \sigma_2(B_{i+1}) - \sigma_1(B_{i-1}) -\tau_{111}(T_+) -\tau_{111}(T_-) .
\end{align*}
Since $\lambda_i \mu_i \nu_i=1$, this gives
\begin{align*}
\log \lambda_i & = {\textstyle \frac13} \sigma_1(B_{i+1}) +  {\textstyle \frac23} \sigma_2(B_{i+1}) + {\textstyle \frac23} \sigma_1(B_{i-1}) + {\textstyle \frac13} \sigma_2(B_{i-1}) +{\textstyle \frac23}\tau_{111}(T_+) + {\textstyle \frac23}\tau_{111}(T_-),
\\
\log \mu_i &=  {\textstyle \frac13} \sigma_1(B_{i+1}) -  {\textstyle \frac13}\sigma_2(B_{i+1}) - {\textstyle \frac13} \sigma_1(B_{i-1}) +  {\textstyle \frac13}\sigma_2(B_{i-1}) -{\textstyle \frac13}\tau_{111}(T_+) -{\textstyle \frac13}\tau_{111}(T_-) ,
\\
\log\nu_i &=  -{\textstyle \frac23}\sigma_1(B_{i+1}) -  {\textstyle \frac13}\sigma_2(B_{i+1}) -  {\textstyle \frac13}\sigma_1(B_{i-1}) -{\textstyle \frac23} \sigma_2(B_{i-1}) -{\textstyle \frac13}\tau_{111}(T_+) -{\textstyle \frac13}\tau_{111}(T_-)  .
\end{align*}
The computation is then completed by the property that $\tau_i = \mu_i + \nu_i$. 
\end{proof}

To compute the internal invariants $s$ and $t$, we can modify the configuration of the triangles $\Delta_+$, $\Delta_1$, $\Delta_2$, $\Delta_3$ by a projective transformation so that we still have $F_1^{(1)}=[1,0,0]$,  $F_2^{(1)}=[0,1,0]$, $F_3^{(1)}=[0,0,1]$ as in Figure~\ref{fig:Triangles1}, but  so that in addition the intersection line of the planes $F_1^{(2)} \cap F_3^{(2)}$ corresponds to the point $[1, -1, 1]\in \RP$. Then, the intersections $F_1^{(2)} \cap F_2^{(2)}$ and $F_2^{(2)} \cap F_3^{(2)}$ respectively correspond to points $[x,1,-1]$ and $[-x, x, 1]\in \RP$ for some $x>0$. See Figure~\ref{fig:Triangle3}. 



\begin{figure}[htbp]

\SetLabels
( .42*.78) $F_2^{(1)}=[ 0,1,0] $ \\
( .75* .43) $F_3^{(1)}=[0,0,1 ] $\\
(.18 * .42) $F_1^{(1)}=[1,0,0] $ \\
(.1*.85) $F_1^{(2)}$\\
(.71*.85) $F_2^{(2)}$\\
(.79*.63) $F_3^{(2)}$\\
(.55 * .12) $[1, -1, 1]$\\
(.1 * .64) $[x, 1, -1]$\\
(.98 * .88) $[-x, x, 1]$\\
\tiny
(.45 * .32) $[a_2 ,-1 ,c_2 ] $ \\
( .12* .54) $ [ a_3, b_3, -1]$ \\
(.68 * .7) $[ -1,b_1 ,c_1 ] $ \\
\endSetLabels

\centerline{\AffixLabels{\includegraphics{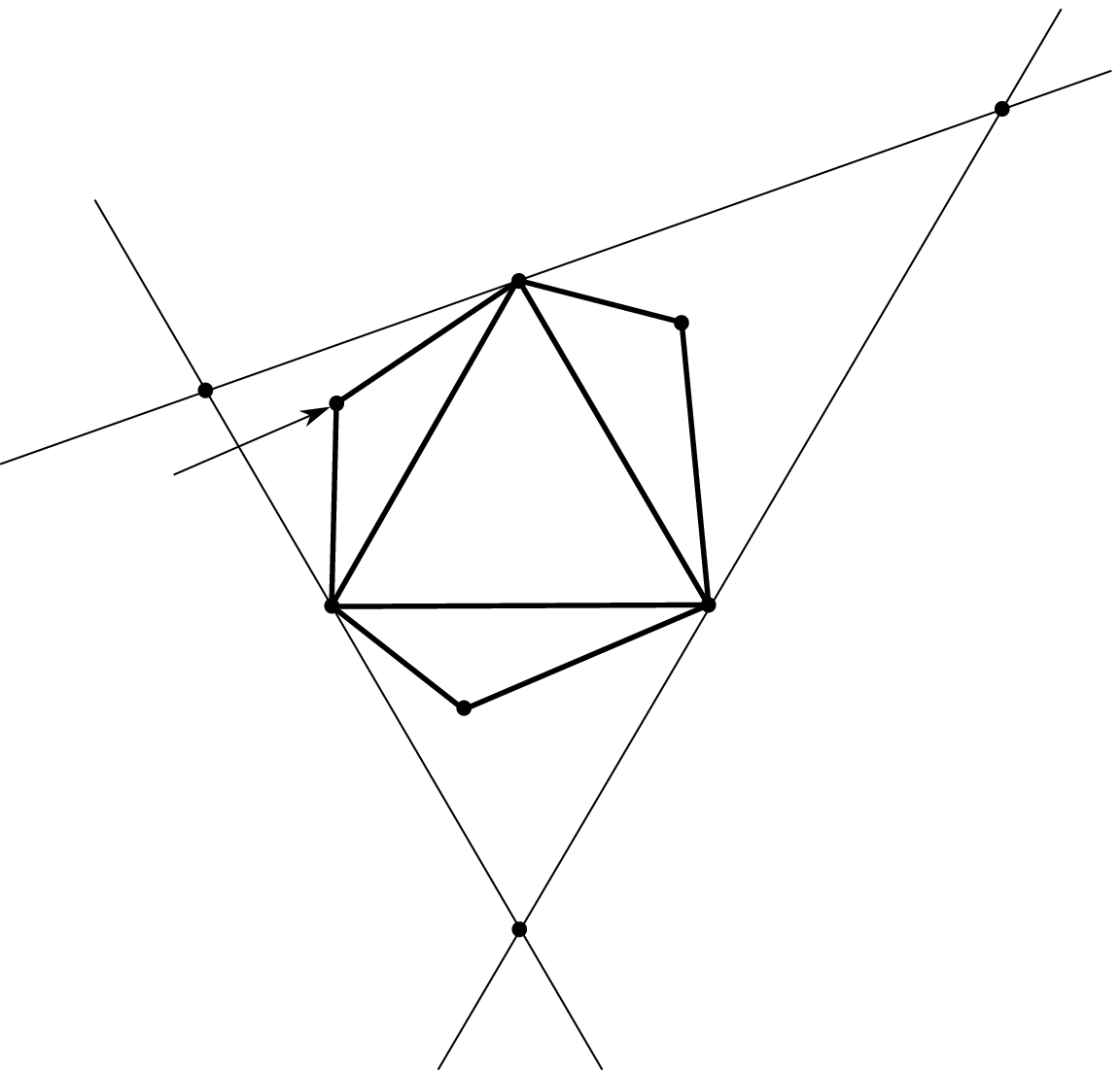}}}

\caption{}
\label{fig:Triangle3}
\end{figure}

With this normalization and using $F_1^{\prime(1)}=[-1,b_1, c_1]$,  $F_2^{\prime(1)}=[a_2,-1,c_2]$, $F_3^{\prime(1)}=[a_3, b_3,-1]$, the Fock-Goncharov invariants are now
\begin{align*}
\sigma_1(B_1) &= \log(b_1-1) &
\sigma_2(B_1) &= -\log(xc_1-1)\\
\sigma_1(B_2) &= \log(c_2-1) &
\sigma_2(B_2) &= -\log(a_2-1)\\
\sigma_1(B_3) &= \log(a_3x^{-1}-1) &
\sigma_2(B_3) &= -\log(b_3-1)\\
\tau_{111}(T_+)  &=\log x&&\\
\end{align*}

The triangle invariant $ \tau_{111}(T_-)$ is harder to compute from this data, but we will not need it at this point. 

Solving the above equations gives that 
\begin{align*}
b_1 &= \mathrm e^{\sigma_1(B_1)} +1 &
c_1 &=  \mathrm e^{-\tau_{111}(T_+)} (\mathrm e^{-\sigma_2(B_1)}+1)\\
a_2&= \mathrm e^{-\sigma_2(B_2)} +1 &
c_2 &= \mathrm e^{\sigma_1(B_2)}+1\\
a_3 &= \mathrm e^{\tau_{111}(T_+)}(\mathrm e^{\sigma_1(B_3)} +1) &
b_3 &= \mathrm e^{-\sigma_2(B_3)}+1
\end{align*}

In particular, this enables us to compute the invariant
$$
t=\frac{a_2 b_3}{a_3} = \mathrm e^{-\tau_{111}(T_+)}(\mathrm e^{-\sigma_2(B_2)} +1)( \mathrm e^{-\sigma_2(B_3)}+1)(\mathrm e^{\sigma_1(B_3)} +1)^{-1}
$$
 in terms of the Fock-Goncharov invariants $\sigma_i(B_j)$ and $\tau_{111}(T_+)$. Similarly, since Goldman's boundary invariants $\lambda_i$ and $\tau_j$ are determined by Proposition~\ref{prop:GoldmanBoudaryFromFockGoncharov}, the internal invariant $s$ is determined as the positive solution to any of the following three equations
 \begin{align*}
( \mathrm e^{\sigma_1(B_2)}+1)(\mathrm e^{-\sigma_2(B_3)}+1) &= 1 +  \tau_1 \sqrt{\frac{\lambda_1\lambda_3}{\lambda_2}} s + \frac{\lambda_3}{\lambda_2}s^2,\\
 (\mathrm e^{\sigma_1(B_3)} +1) (\mathrm e^{-\sigma_2(B_1)}+1) &= 1 +  \tau_2 \sqrt{\frac{\lambda_1\lambda_2}{\lambda_3}} s + \frac{\lambda_1}{\lambda_3}s^2,\\
( \mathrm e^{\sigma_1(B_1)} +1) (\mathrm e^{-\sigma_2(B_2)} +1 )&= 1 +  \tau_3 \sqrt{\frac{\lambda_2\lambda_3}{\lambda_1}} s + \frac{\lambda_2}{\lambda_1}s^2.
\end{align*}

	After substituting into any of the above equations the expressions for the $\lambda_i$ and $\tau_i$ given  by Proposition~\ref{prop:GoldmanBoudaryFromFockGoncharov} and applying the Quadratic Formula, a long but elementary computation then leads to the following remarkably simple expression for $s$. (This long computation could be simplified by using a different expression of the parameter $s$, also found in \cite{Goldman}, but at the expense of making our \S \ref{sect:Goldman} more complicated.)
	
\begin{proposition}
\label{prop:InternalParam}
In the pair of pants, the  internal parameters $s$ and $t$ are respectively equal to
\begin{align*}
s&=  \mathrm e^{\frac16\big( \sigma_1(B_1) +\sigma_1(B_2) +\sigma_1(B_3) -\sigma_2(B_1) -\sigma_2(B_2) -\sigma_2(B_3) \big)} 
\\
\text{and } t&=  \mathrm e^{-\tau_{111}(T_+)}(\mathrm e^{-\sigma_2(B_2)} +1)( \mathrm e^{-\sigma_2(B_3)}+1)(\mathrm e^{\sigma_1(B_3)} +1)^{-1}.
\end{align*}
\vskip -\belowdisplayskip
\vskip -\baselineskip \qed
\end{proposition}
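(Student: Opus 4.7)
The formula for $t$ is already derived in the displayed computation immediately preceding the statement, so only the identity for $s$ needs to be justified. My plan is to bypass the Quadratic Formula (and the painful expansions it would require) by exploiting the obvious factorization of the left-hand side of each defining equation, and engineering the right-hand side to factor the same way.

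Concretely, consider the first of the three equations:
\[
b_3 c_2 = 1 + \tau_1\sqrt{\lambda_1\lambda_3/\lambda_2}\,s + (\lambda_3/\lambda_2)\,s^2.
\]
From the formulas for $b_3$ and $c_2$ derived just before the proposition, the left-hand side factors as $(1 + A)(1 + B)$ with $A = \mathrm e^{\sigma_1(B_2)}$ and $B = \mathrm e^{-\sigma_2(B_3)}$. It therefore suffices to verify the two identities $(\lambda_3/\lambda_2)\,s^2 = AB$ and $\tau_1 \sqrt{\lambda_1 \lambda_3/\lambda_2}\, s = A + B$. For the first, Proposition~\ref{prop:GoldmanBoudaryFromFockGoncharov} provides $\log(\lambda_2/\lambda_3)$ as an explicit linear combination of the Fock-Goncharov invariants; the candidate $s = \sqrt{\lambda_2/\lambda_3}\, \mathrm e^{\frac12 \sigma_1(B_2) - \frac12 \sigma_2(B_3)}$ forced by this identity simplifies, after collecting exponents, to precisely the formula in the proposition, with (remarkably) the coefficients of $\tau_{111}(T_{\pm})$ canceling out and each of the six shear invariants appearing with equal weight $\pm\frac16$. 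For the second, I substitute the Proposition~\ref{prop:GoldmanBoudaryFromFockGoncharov} expressions for $\tau_1$ and for $\sqrt{\lambda_1 \lambda_3/\lambda_2}$, multiply by the candidate $s$ just found, and check that the resulting exponent collapses to $\sigma_1(B_2)$; the factor $(\mathrm e^{-\sigma_1(B_2) - \sigma_2(B_3)} + 1)$ appearing in $\tau_1$ then distributes to give exactly $\mathrm e^{-\sigma_2(B_3)} + \mathrm e^{\sigma_1(B_2)} = A + B$.

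Finally, the proposed formula for $s$ is invariant under cyclic permutation of the indices $1, 2, 3$, and the three defining equations are permuted cyclically by that same action, so verifying the first equation automatically implies the other two, confirming that $s$ is the unique common positive solution. The main obstacle is purely bookkeeping: the argument requires tracking many fractional coefficients across the six shear variables and two triangle invariants, and the appeal of the final answer lies entirely in how thoroughly everything cancels.
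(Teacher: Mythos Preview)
Your argument is correct, and it is genuinely different from what the paper does. The paper simply asserts that one substitutes the expressions for $\lambda_i$ and $\tau_i$ from Proposition~\ref{prop:GoldmanBoudaryFromFockGoncharov} into one of the quadratic equations, applies the Quadratic Formula, and after a ``long but elementary computation'' arrives at the stated formula for $s$; it even remarks that the computation could be shortened only by importing an alternate definition of $s$ from \cite{Goldman}. You instead observe that the left-hand side $b_3c_2$ already factors as $(1+A)(1+B)$ with $A=\mathrm e^{\sigma_1(B_2)}$ and $B=\mathrm e^{-\sigma_2(B_3)}$, and then verify the two identities $(\lambda_3/\lambda_2)s^2=AB$ and $\tau_1\sqrt{\lambda_1\lambda_3/\lambda_2}\,s=A+B$ separately, which together force the quadratic to hold. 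This sidesteps the Quadratic Formula entirely: solving $(\lambda_3/\lambda_2)s^2=AB$ for $s$ is a purely additive bookkeeping exercise in the logarithmic coordinates and produces the candidate directly, and the second identity then collapses cleanly because the exponent of $\sqrt{\lambda_1\lambda_3/\lambda_2}\,s$ turns out to be exactly the negative of the exponent in the $\tau_1$ formula plus $\sigma_1(B_2)$. The cyclic-symmetry argument at the end is a nice touch that the paper does not make explicit. What the paper's route buys is that it requires no insight---one just grinds; what your route buys is a much shorter computation and a structural explanation of why the miraculous simplification occurs, namely that the factored form $(1+A)(1+B)$ is compatible with the factored form $\tau_i=(\mathrm e^{-\sigma_1(B_{i+1})-\sigma_2(B_{i-1})}+1)\mu_i$ already present in Proposition~\ref{prop:GoldmanBoudaryFromFockGoncharov}.
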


\vskip \belowdisplayskip

\section{The pair of pants: from the Goldman coordinates to the Fock-Goncharov coordinates}
\label{sect:PantsFromGoldmanToFG}

It is  elementary to invert the formulas of Propositions~\ref{prop:GoldmanBoudaryFromFockGoncharov} and \ref{prop:InternalParam}, in order to express the Fock-Goncharov coordinates in terms of the Goldman coordinates.

\begin{proposition}
\label{prop:FromGoldmanToFockGoncharov}
The Fock-Goncharov shear coordinates $\sigma_1(B_1)$, $\sigma_1(B_2)$, $\sigma_1(B_3)$, $\sigma_2(B_1)$, $\sigma_2(B_2)$, $\sigma_2(B_3)$ are expressed in terms of the Goldman coordinates $\lambda_1$, $\lambda_2$, $\lambda_3$, $\tau_1$, $\tau_2$, $\tau_3$, $s$, $t$ by the property that
\begin{align*}
\sigma_1(B_i)&= \log \left( s{\mu_{i-1}}  \sqrt{\frac{\lambda_{i-1}\lambda_{i+1}}{\lambda_i}} \right) 
\\
\sigma_2(B_i)&= 
\log \left( \frac{\mu_{i+1}}s  \sqrt{\frac{\lambda_{i-1}\lambda_{i+1}}{\lambda_i}} \right) 
\end{align*}
where
$$ 
\mu_i = \frac{\tau_i  - \sqrt{\tau_i^2 - \frac4{ \lambda_i}}}{2}.
$$
Also, the triangle invariants $\tau_{111}(T_+)$ and $\tau_{111}(T_+)$ are given by
\begin{align*}
\tau_{111}(T_+) &= \log \frac{(\mathrm e^{-\sigma_2(B_2)} +1)( \mathrm e^{-\sigma_2(B_3)}+1)}{t(\mathrm e^{\sigma_1(B_3)} +1)}
\\
\tau_{111}(T_-) &= \log \frac {t\mu_1\mu_2\mu_3 (\mathrm e^{\sigma_1(B_3)} +1)} {(\mathrm e^{-\sigma_2(B_2)} +1)( \mathrm e^{-\sigma_2(B_3)}+1)}
\end{align*}
with $\sigma_1(B_i)$, $\sigma_2(B_i)$ and $\mu_i$ as above.  
\end{proposition}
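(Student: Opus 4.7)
The plan is to invert the formulas of Propositions~\ref{prop:GoldmanBoudaryFromFockGoncharov} and~\ref{prop:InternalParam} by purely algebraic manipulations; no new geometric input should be needed.

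First I would derive the formula for $\mu_i$. Since $\rho_\pi(A_i)\in\SL$ has determinant~$1$, we have $\lambda_i\mu_i\nu_i=1$; combined with $\mu_i+\nu_i=\tau_i$, this shows that $\mu_i$ and $\nu_i$ are the two roots of the quadratic $X^2-\tau_iX+\lambda_i^{-1}=0$. Because $\mu_i<\nu_i$, the smaller root (minus sign in the quadratic formula) is $\mu_i$, which gives the stated expression.

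Next, to establish the expressions for $\sigma_1(B_i)$ and $\sigma_2(B_i)$, I would substitute the log-linear formulas for $\log\lambda_j$ and $\log\mu_j$ (from the proof of Proposition~\ref{prop:GoldmanBoudaryFromFockGoncharov}) together with the expression for $\log s$ (from Proposition~\ref{prop:InternalParam}) into $\log\bigl(s\mu_{i-1}\sqrt{\lambda_{i-1}\lambda_{i+1}/\lambda_i}\bigr)$, and check that after collecting terms all coefficients vanish except that of $\sigma_1(B_i)$, which equals $1$. The formula for $\sigma_2(B_i)$ is verified by the same type of calculation. Equivalently, one can view Propositions~\ref{prop:GoldmanBoudaryFromFockGoncharov} and~\ref{prop:InternalParam} as a linear system whose unknowns are the six shears $\sigma_a(B_j)$ together with the sum $T:=\tau_{111}(T_+)+\tau_{111}(T_-)$, and invert this system explicitly.

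Third, the formula for $\tau_{111}(T_+)$ is obtained by directly solving the $t$--equation in Proposition~\ref{prop:InternalParam} for $\mathrm e^{-\tau_{111}(T_+)}$. Finally, for $\tau_{111}(T_-)$, I would sum the three expressions for $\log\mu_i$ coming from the proof of Proposition~\ref{prop:GoldmanBoudaryFromFockGoncharov}: the six shear contributions cancel cyclically, and one obtains a closed-form identity expressing $T=\tau_{111}(T_+)+\tau_{111}(T_-)$ as a linear combination of the $\log\mu_i$. Combining this identity with the third step then yields the announced expression for $\tau_{111}(T_-)$.

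The main potential difficulty is purely organizational—namely the bookkeeping needed to keep track of the cyclic index conventions and the signs in the six-variable linear system for the shears. No single step introduces any new geometric content beyond the two earlier propositions and the quadratic relation between $\mu_i$ and $(\lambda_i,\tau_i)$, so the argument should reduce to a mechanical computation once the right combinations are identified.
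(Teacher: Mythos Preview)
Your proposal is correct and follows essentially the same route as the paper: derive $\mu_i$ from the quadratic $X^2-\tau_iX+\lambda_i^{-1}=0$, treat the equations from Propositions~\ref{prop:GoldmanBoudaryFromFockGoncharov} and~\ref{prop:InternalParam} as a linear system in the six shears together with $\tau_{111}(T_+)+\tau_{111}(T_-)$ and invert it, read off $\tau_{111}(T_+)$ from the $t$--formula, and obtain $\tau_{111}(T_-)$ from the identity $\tau_{111}(T_+)+\tau_{111}(T_-)=-\sum_i\log\mu_i$ that comes from summing the three $\log\mu_i$ expressions. The paper phrases step two as ``solving this system of seven linear equations in seven unknowns'' rather than your substitute-and-verify alternative, but these are the same computation.
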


\begin{proof} As usual, let $\mu_i>0$ be the middle eigenvalue of $\rho_\pi(A_i)$. Then
$$ 
\mu_i = \frac{\tau_i  - \sqrt{\tau_i^2 - \frac4{ \lambda_i}}}{2}
$$
because $\tau_i=\mu_i + \nu_i$ with $0<\lambda_i < \mu_i < \nu_i$ and $\lambda_i  \mu_i \nu_i=1$.

As in the proof of Proposition~\ref{prop:GoldmanBoudaryFromFockGoncharov}, 
\begin{align*}
\log \lambda_i & = {\textstyle \frac13} \sigma_1(B_{i+1}) +  {\textstyle \frac23} \sigma_2(B_{i+1}) + {\textstyle \frac23} \sigma_1(B_{i-1}) + {\textstyle \frac13} \sigma_2(B_{i-1}) +{\textstyle \frac23}\bigl(\tau_{111}(T_+) +\tau_{111}(T_-)\bigr),
\\
\log \mu_i &=  {\textstyle \frac13} \sigma_1(B_{i+1}) -  {\textstyle \frac13}\sigma_2(B_{i+1}) - {\textstyle \frac13} \sigma_1(B_{i-1}) +  {\textstyle \frac13}\sigma_2(B_{i-1}) -{\textstyle \frac13}\bigl(\tau_{111}(T_+) +\tau_{111}(T_-)\bigr) ,
\end{align*}
while Proposition~\ref{prop:InternalParam} gives
$$
\log s = {\textstyle \frac16}\sigma_1(B_1) + {\textstyle \frac16}\sigma_1(B_2) + {\textstyle \frac16}\sigma_1(B_3)-  {\textstyle \frac16}\sigma_2(B_1) - {\textstyle \frac16}\sigma_2(B_2) - {\textstyle \frac16}\sigma_2(B_3) .
$$
Solving this system of seven linear equations in seven unknown, we obtain
\begin{align*}
\sigma_1(B_i)&= {\textstyle \frac12} \log\lambda_{i-1} -  {\textstyle \frac12} \log\lambda_{i} +  {\textstyle \frac12} \log\lambda_{i+1} + \log \mu_{i-1} + \log s
\\
\sigma_2(B_i)&= {\textstyle \frac12} \log\lambda_{i-1} -  {\textstyle \frac12} \log\lambda_{i} +  {\textstyle \frac12} \log\lambda_{i+1} + \log \mu_{i+1} - \log s
\\
\tau_{111}(T_+) + \tau_{111}(T_-) &= -\log \mu_1 -\log \mu_2 -\log \mu_3.
\end{align*}

This provides the expressions of $\sigma_1(B_i)$ and $\sigma_2(B_i)$ indicated in the statement of Proposition~\ref{prop:FromGoldmanToFockGoncharov}. The triangle invariant $\tau_{111}(T_+)$ is obtained from the expression of the internal parameter $t$ in Proposition~\ref{prop:InternalParam}, from which we then deduce $\tau_{111}(T_-)$ by the above computation of $\tau_{111}(T_+)+\tau_{111}(T_-)$. 
\end{proof}
	
\section{More general surfaces}	
\label{sect:GeneralSurfaces}

We now consider the case of a compact oriented surface $S$ of genus $g$ with $n$ boundary components. This includes closed surfaces, where $n=0$. Goldman actually allows non-orientable surfaces in \cite{Goldman}, but the definitions of \cite{BonDre1} make heavy use of an orientation. 

In \cite{Goldman}, Goldman shows that the space $\mathfrak P(S)$ of isotopy classes of convex projective structures on $S$ is diffeomorphic to $\R^{16g+8n-16}$, by constructing explicit coordinates for this space. For this, he uses a \emph{pair of pants decomposition} of the surface, namely a family $C$ of disjoint simple closed curves in the interior of $S$  such that each component of $S-C$ is a pair of pants. An Euler characteristic computation shows that $C$ has $ 3g+n-3$ components, and that $S-C$ consists of $2g+n-2$ pairs of pants.  Choose an arbitrary orientation on each component $C_i$ of $C \cup \partial S$, with $i=1$, 2, \dots, $ 3g+2n-3$. 

Given this topological data, Goldman's coordinates are given by various invariants associated to each convex projective structure $\pi \in \mathfrak P(S)$ with monodromy $\rho_\pi \colon \pi_1(S) \to \SL$. 

The first set of coordinates are the numbers $\lambda_i$ and $\tau_i = \mu_i + \nu_i$ associated to the eigenvalues $0<\lambda_i <\mu_i <\nu_i$ of a matrix of $\SL$ representing $\rho_\pi(C_i) \in \SL$, as in \S \ref{sect:Goldman}. These invariants are constrained by the inequalities
$$2\lambda_i^{-\frac12} < \tau_i < \lambda_i + \lambda_i^{-2}.$$ 
This gives $6g+4n-6$ invariants.

Then, for each component $P_j$ of $S-C$, we have the two internal parameters $s_j$, $t_j>0$ constructed in \S \ref{sect:Goldman}. Note that, when a curve $C_i$ is a boundary component of a pair of pants $P_j$, this construction may require reversing the orientation of $C_i$ to make it opposite the boundary orientation, in order to match the conventions of  \S \ref{sect:Goldman}; this amounts to replacing $\lambda_i$ by $\lambda_i' = \nu_i^{-1} =\frac2{\tau_i  + \sqrt{\tau_i^2 - \frac4{ \lambda_i}}}$ and $\tau_i$ by $\tau_i'= \lambda_i^{-1} + \mu_i^{-1}= \lambda_i^{-1} + \frac2{\tau_i  - \sqrt{\tau_i^2 - \frac4{ \lambda_i}}}$. 

This gives $4g+2n-4$ additional invariants. 

Finally, Goldman identifies two additional degrees of freedom for each curve $C_i$ of $C$. The corresponding invariants $(u_i, v_i) \in \R^2$ are only defined up to a translation in $\R^2$. This ambiguity is very analogous to the well-known difficulty in defining the Fenchel-Nielsen twist coordinates for the Teichm\"uller space $\mathcal T(S)$. 

More precisely, the curve $C_i$ defines an action of $\R^2$ that generalizes the earthquake flow along $C_i$ on the Teichm\"uller space $\mathcal T(S)$.

The $\R\times 0$ part of the action is defined by the \emph{twist deformation} $T_{C_i}^u \colon \mathfrak P(S) \to \mathfrak P(S)$ which, for each $u\in \R$, modifies the projective structure $\pi \in \mathfrak P(S)$ on the right-hand  side of the curve $C_i$ by composing its charts with the projective map $\RP \to \RP$ induced by the linear map $\R^3 \to \R^3$ with matrix 
$$ 
\begin{pmatrix}
\mathrm e^{-u} & 0 &0\\
0& 1& 0 \\
0&0& \mathrm e^{u}
\end{pmatrix}
$$
in a basis where the coordinate vectors are eigenspaces of $\rho_\pi(C_i) \in \SL$ respectively corresponding to the eigenvalues $\lambda_i$, $\mu_i$, $\nu_i$ (with the usual convention that $0< \lambda_i < \mu_i < \nu_i$). As in the classical definition of the Fenchel-Nielsen twists in 2--dimensional hyperbolic geometry, the precise construction of this deformation requires us to work in the universal cover $\widetilde S$ and to define the projective structure $T_{C_i}^u(\pi)$ by deformation of the developing map $\mathrm{dev}_\pi \colon \widetilde S \to \RP$; the reader should  have no problem reconstructing the details of this construction (and can always consult \cite{Goldman}).

The  \emph{bulge deformation} $B_{C_i}^v \colon \mathfrak P(S) \to \mathfrak P(S)$ is similarly defined by using the matrix
$$ 
\begin{pmatrix}
\mathrm e^{-v} & 0 &0\\
0& \mathrm e^{2v} & 0 \\
0&0& \mathrm e^{-v}
\end{pmatrix}
$$
instead. 

Goldman's coordinates $(u_i, v_i)\in \R^2$ are only defined up to translation of $\R^2$. Their main property is that, if  $\pi \in \mathfrak P(S)$ has coordinates $(u_i, v_i)$, then its twist deformation $T_{C_i}^u(\pi)$ has coordinates $(u_i+u, v_i)$ while its bulge deformation $B_{C_i}^v(\pi)$ has coordinates $(u_i, v_i+v)$.

Since the pair of pant decomposition $C$ has $ 3g+n-3$ components, this provides $6g+2n-6$ additional coordinates. With the eigenvalue coordinates $(\lambda_i, \mu_i)$ associated to the $ 3g+2n-3$ components of $C \cup \partial S$ and the internal parameters $(s_i, t_i)$ associated to the $2g+n-2$ components of $S-C$, we now have a total of 
$$
2(3g+n-3) + 2(3g+2n-3) + 2(2g+n-2) = 16g +8n -10 = -8 \chi(S)
$$
coordinates, constrained by the inequalities that $0<2\lambda_i^{-\frac12} < \tau_i < \lambda_i + \lambda_i^{-2}$, $s_j>0$ and $t_j>0$.

Goldman shows that these coordinates provide a diffeomorphism between $\mathfrak P(S)$ and the polytope in $\R^{ -8 \chi(S)}$ defined by these constraints. 

\medskip

In \cite{BonDre1}, Bonahon and Dreyer introduce a different set of coordinates for $\mathfrak P(S)$,  similarly associated to a pair of pants decomposition $C$. They consider the geodesic lamination $\Lambda$ that is the union of $C\cup \partial S$ and of three spiraling lines as in Figure~\ref{fig:PantsSpiral} for each pair of pants component of $S-C$, together with additional  topological information at each component $C_i$ of $C$ described by a small arc transverse to $\Lambda$ and cutting $C_i$ in exactly one point. In particular, the complement $S-\Lambda$ consists of $4g+2n-4$ infinite triangles, two for each component of $S-C$. 

Given a projective structure $\pi \in \mathfrak P(S)$, they associate an invariant $\tau_{111}(T)\in \R$ to each component $T$ of $S-\Lambda$, two shear invariants $\sigma_1(B)$, $\sigma_2(B)\in \R$ to each spiraling leaf $B$ of $\Lambda$, and two more shear invariants $\sigma_1(C_i)$, $\sigma_2(C_i)\in \R$ to each component $C_i$ of the pair of pants decomposition $C$. 

The coordinates $\tau_{111}(T)$, $\sigma_1(B)$, $\sigma_2(B)$ are the Fock-Goncharov coordinates arising in the pair of pants of $S-C$, and \S \ref{sect:PantsFromFGtoGoldman} and \S \ref{sect:PantsFromGoldmanToFG} indicate how these are connected to the Goldman coordinates. 

The shear invariants $\sigma_1(C_i)$, $\sigma_2(C_i)\in \R$ of $\pi \in \mathfrak P(S)$ are well-defined by the topological data (not just up to translation of $\R^2$). More precisely, lift $C_i$ to a line $\widetilde C_i$ in the universal cover $\widetilde S$ that is invariant under $C_i \in \pi_1(S)$. The topological data provided by the little arc transverse to $C_i$ then determines a pair $(\widetilde B_i^{\mathrm{left}}, \widetilde B_i^{\mathrm{right}})$, uniquely defined modulo the action of   $C_i \in \pi_1(S)$, leaves of the preimage in $\widetilde S$ of the geodesic lamination $\Lambda$; such that: 
\begin{itemize}
\item $\widetilde B_i^{\mathrm{left}}$ and $\widetilde B_i^{\mathrm{right}}$ each lift a spiraling leaf of $\Lambda$;
\item $\widetilde B_i^{\mathrm{left}}$ is asymptotic to the positive endpoint of $\widetilde C_i$, and is located on the left-hand side of $\widetilde C_i$;
\item $\widetilde B_i^{\mathrm{right}}$ is asymptotic to the negative endpoint of $\widetilde C_i$, and is located on the right-hand side of $\widetilde C_i$.
\end{itemize}
See \cite{BonDre1} for details. 
As in \S \ref{sect:FockGoncharov}, the developing map $\mathrm{dev}_\pi \colon \widetilde S \to \RP$ and its monodromy $\rho_\pi \colon \pi_1(S) \to \SL$  associate flags $E_i$, $F_i$, $G_i$, $H_i$ to the positive endpoint of $\widetilde C_i$, the negative endpoint of $\widetilde C_i$, the endpoint of $\widetilde B_i^{\mathrm{left}}$ that is not an endpoint of $\widetilde C_i$, and the endpoint of $\widetilde B_i^{\mathrm{right}}$ that is not an endpoint of $\widetilde C_i$, respectively. 
The invariants $\sigma_1(C_i)$, $\sigma_2(C_i)\in \R$ of $\pi \in \mathfrak P(S)$  are then defined by the property that
$$
\sigma_1(C_i) = \log \left(-
\frac{e_{i}^{(1)} \wedge f_{i}^{(1)} \wedge g_i^{(1)}}
{e_{i}^{(1)} \wedge f_{i}^{(1)} \wedge h_i^{(1)}}
\frac{ f_{i}^{(2)} \wedge h_i^{(1)}}
{f_{i}^{(2)} \wedge g_i^{(1)}}
\right)
$$
and
$$
\sigma_2(C_i) =  \log \left(-
\frac{e_{i}^{(1)} \wedge f_{i}^{(1)} \wedge h_i^{(1)}}
{e_{i}^{(1)} \wedge f_{i}^{(1)} \wedge g_i^{(1)}}
\frac{ e_{i}^{(2)} \wedge g_i^{(1)}}
{e_{i}^{(2)} \wedge h_i^{(1)}}
\right)
$$
with our usual conventions that $e_i^{(a)} \in \Lambda^a(E_i^{(a)})$, $f_i^{(a)} \in \Lambda^a(F_i^{(a)})$, $g_i^{(a)} \in \Lambda^a(G_i^{(a)})$ and $h_i^{(a)} \in \Lambda^a(H_i^{(a)})$.

\begin{lemma}
\label{lem:TwistBulge}
Let $C_i$ be a component of the  pair of pants decomposition $C$. If the corresponding shear invariants of $\pi \in \mathfrak P(S)$ are $\sigma_1(C_i)$ and $\sigma_2(C_i)$, then the projective structure $T_{C_i}^u(\pi) \in \mathfrak P(S)$ defined by twisting  $\pi$ along $C_i$ has shear invariants $\sigma_1(C_i)+u$ and $\sigma_2(C_i)+u$. Similarly,  the projective structure $B_{C_i}^v(\pi)\in \mathfrak P(S)$ defined by bulging $\pi$ along $C_i$ has shear invariants $\sigma_1(C_i)-3v$ and $\sigma_2(C_i)+3v$.
\end{lemma}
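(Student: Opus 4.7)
The plan is to work throughout in the eigenbasis $(e_\lambda, e_\mu, e_\nu)$ of $\R^3$ for $\rho_\pi(C_i) \in \SL$ corresponding to the eigenvalues $\lambda_i < \mu_i < \nu_i$; by definition this is the basis in which the twist matrix $T_u = \mathrm{diag}(\mathrm e^{-u}, 1, \mathrm e^u)$ and the bulge matrix $B_v = \mathrm{diag}(\mathrm e^{-v}, \mathrm e^{2v}, \mathrm e^{-v})$ are displayed. The key preliminary observation is that both flags $E_i$ and $F_i$ at the endpoints of $\widetilde C_i$ are invariant under $\rho_\pi(C_i)$ (their attachment points being fixed by $C_i$), so each of the subspaces $E_i^{(a)}$ and $F_i^{(a)}$ is spanned by a subset of $\{e_\lambda, e_\mu, e_\nu\}$. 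The diagonal matrices $T_u$ and $B_v$ therefore preserve each of these subspaces, and the chosen multi-vectors $e_i^{(a)}, f_i^{(a)}$ are at most rescaled by positive constants, which cancel in the cross-ratios since each appears equally often in numerator and denominator.

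The core mechanism is that the twist $T_{C_i}^u$ leaves $\mathrm{dev}_\pi$ unchanged on the left-hand side of $\widetilde C_i$ and replaces it by $T_u \circ \mathrm{dev}_\pi$ on the right-hand side; the bulge behaves analogously with $B_v$ in place of $T_u$. Since $G_i$ is attached to an endpoint of $\widetilde B_i^{\mathrm{left}}$, which lies on the left of $\widetilde C_i$, the flag $G_i$ is unchanged; while $H_i$, sitting on the right of $\widetilde C_i$, is replaced by $T_u \cdot H_i$ (or $B_v \cdot H_i$). Concretely, $h_i^{(1)}$ becomes $T_u h_i^{(1)}$ or $B_v h_i^{(1)}$, and the entire argument reduces to tracking how the eigen-coordinates of $h_i^{(1)}$ change.

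Writing $h_i^{(1)} = \beta_\lambda e_\lambda + \beta_\mu e_\mu + \beta_\nu e_\nu$, each wedge product in the formulas for $\sigma_1(C_i)$ and $\sigma_2(C_i)$ that involves $h_i^{(1)}$ reduces, up to a fixed volume form, to exactly one component $\beta_j$, with $j$ the index complementary to the eigenspaces spanning the other flag factors of that wedge. A short check using the conventions of \cite{BonDre1}, under which $E_i^{(1)} = \langle e_\nu \rangle$, $E_i^{(2)} = \langle e_\mu, e_\nu \rangle$, $F_i^{(1)} = \langle e_\lambda \rangle$, $F_i^{(2)} = \langle e_\lambda, e_\mu \rangle$, shows that $e_i^{(1)} \wedge f_i^{(1)} \wedge h_i^{(1)}$ selects $\beta_\mu$, that $f_i^{(2)} \wedge h_i^{(1)}$ selects $\beta_\nu$, and that $e_i^{(2)} \wedge h_i^{(1)}$ selects $\beta_\lambda$. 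The twist scales $(\beta_\lambda, \beta_\mu, \beta_\nu)$ componentwise by $(\mathrm e^{-u}, 1, \mathrm e^u)$ and the bulge by $(\mathrm e^{-v}, \mathrm e^{2v}, \mathrm e^{-v})$. Substituting into the shear formulas and collecting logarithmic contributions, with signs dictated by numerator/denominator placement, then gives the stated increments $+u, +u$ under $T_{C_i}^u$ and $-3v, +3v$ under $B_{C_i}^v$.

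The step requiring the most attention is pinning down the flag convention of \cite{BonDre1} that assigns a specific eigenspace to each of $E_i^{(a)}, F_i^{(a)}$; once this is done, the arithmetic is mechanical. For instance, the coefficient $-3$ in $\Delta\sigma_1(C_i) = -3v$ arises as the sum of $-\log(\mathrm e^{2v})$ (from the $\beta_\mu$-scaling in the denominator of the first ratio defining $\sigma_1(C_i)$) and $+\log(\mathrm e^{-v})$ (from the $\beta_\nu$-scaling in the numerator of the second ratio), so that the combination $-2v - v = -3v$ is what produces the specific coefficient appearing in the lemma.
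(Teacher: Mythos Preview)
Your proof is correct and follows essentially the same approach as the paper: both work in the eigenbasis of $\rho_\pi(C_i)$, observe that the invariant flags $E_i$, $F_i$ are spanned by eigenvectors so that only $h_i^{(1)}$ is affected by the deformation, and then read off the effect on the cross-ratios. The paper's proof is simply more terse, stating the conclusion ``by consideration of the impact of this transformation on the above formulas,'' whereas you carry out explicitly the bookkeeping of which coordinate $\beta_\lambda$, $\beta_\mu$, $\beta_\nu$ each wedge product selects.
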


\begin{proof} 
By definition of the stable and unstable flags of $\rho_\pi(C_i) \in \SL$, this matrix  sends $e_i^{(1)}$ to $\nu_i e_i^{(1)}$ and $f_i^{(1)}$ to $\lambda_i f_i^{(1)}$. Let $k_i \in \R^3$ be an eigenvector corresponding to the second eigenvalue $\mu_i$ of $\rho_\pi(C_i)$, with $0<\lambda_i <\mu_i <\nu_i$. In particular, in the above formulas for $\sigma_1(C_i)$ and $\sigma_2(C_i)$, we can take $e_i^{(2)} = e_i^{(1)}\wedge k_i\in \Lambda^2(E^{(2)})$ and $f_i^{(2)} = f_i^{(1)}\wedge k_i\in \Lambda^2(F^{(2)})$. 

By construction, the twist deformation $T_{C_i}^u$ replaces $h_i^{(1)}$ by $T(h_i^{(1)})$, where $T\colon \R^3 \to \R^3$ sends $e_i^{(1)}$ to $\mathrm e^{u} e_i^{(1)}$, $k_i$ to $k_i$, and $f_i^{(1)}$ to $\mathrm e^{-u} f_i^{(1)}$. By consideration of the impact of this transformation on the above formulas for  $\sigma_1(C_i)$ and $\sigma_2(C_i)$, the shear invariants of  $T_{C_i}^u(\pi)$ are respectively $\sigma_1(C_i)+u$ and $\sigma_2(C_i)+u$. 

Similarly, the bulge deformation $B_{C_i}^u$ replaces $h_i^{(1)}$ by $B(h_i^{(1)})$, where $B\colon \R^3 \to \R^3$ sends $e_i^{(1)}$ to $\mathrm e^{-v} e_i^{(1)}$, $k_i$ to $\mathrm e^{2v} k_i$, and $f_i^{(1)}$ to $\mathrm e^{-v} f_i^{(1)}$. It follows that the shear invariants of  $B_{C_i}^u(\pi)$ are respectively $\sigma_1(C_i)-3v$ and $\sigma_2(C_i)+3v$. 
\end{proof}

\begin{proposition}
\label{prop:TwistBulge}
Up to translation by a vector $(u_i^0, v_i^0)\in \R^2$  depending on the choices made in Goldman's construction, the Goldman parameters $(u_i, v_i)$ associated to the closed curve $C_i$ are related to the Bonahon-Dreyer shear invariants $\sigma_1(C_i)$ and $\sigma_2(C_i)$ by the property that
\begin{align*}
\sigma_1(C_i) &= (u_i-u_i^0) - 3(v_i -v_i^0)\\
\text{and }\sigma_2(C_i) &= (u_i-u_i^0) + 3(v_i -v_i^0).
\end{align*}
Similarly,
\begin{align*}
u_i &= {\textstyle \frac12} \big(\sigma_1(C_i) + \sigma_2(C_i) \big) + u_i^0\\
\text{and }v_i &= {\textstyle \frac16} \big(-\sigma_1(C_i) + \sigma_2(C_i) \big) + v_i^0. 
\end{align*}
\end{proposition}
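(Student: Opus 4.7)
My plan is to derive both formulas directly from the transformation laws of the two coordinate systems under the twist and bulge deformations along $C_i$. By the defining property of Goldman's parameters, the twist $T_{C_i}^u$ translates $(u_i,v_i)$ by $(u,0)$ and the bulge $B_{C_i}^v$ translates it by $(0,v)$, while Lemma~\ref{lem:TwistBulge} states that these same deformations translate $(\sigma_1(C_i), \sigma_2(C_i))$ by $(u,u)$ and by $(-3v, 3v)$, respectively.

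Combining these four transformation rules, the quantities $\sigma_1(C_i) - u_i + 3v_i$ and $\sigma_2(C_i) - u_i - 3v_i$ are both invariant under twisting and bulging along $C_i$. To extract the announced formulas from this, I would fix an arbitrary reference projective structure $\pi_0 \in \mathfrak P(S)$ and choose $(u_i^0, v_i^0) \in \R^2$ so that $\sigma_1(C_i) - (u_i - u_i^0) + 3(v_i - v_i^0)$ and $\sigma_2(C_i) - (u_i - u_i^0) - 3(v_i - v_i^0)$ vanish at $\pi_0$; this is a solvable $2 \times 2$ linear system in $(u_i^0, v_i^0)$, and the ambiguity in the choice reflects exactly the translation ambiguity in Goldman's coordinates. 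The two expressions then vanish identically on the entire twist-bulge orbit of $\pi_0$, producing the stated first pair of identities along that orbit. The inverse formulas follow by solving the resulting $2\times 2$ linear system in $(\sigma_1(C_i), \sigma_2(C_i))$.

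The remaining step is to extend the identities from a single twist-bulge orbit to all of $\mathfrak P(S)$. For this, I would argue that both sides depend only on the pair $(u_i, v_i)$, with all other Goldman coordinates held fixed. On the Goldman side this is built into the definition, while on the Bonahon-Dreyer side it follows from the fact that the four flags $E_i, F_i, G_i, H_i$ entering the definition of $\sigma_1(C_i), \sigma_2(C_i)$ depend, modulo the global $\PGL$ action, only on the local geometry of the developing map near $\widetilde C_i$. I expect this locality statement to be the main obstacle of the proof, and would resolve it by appealing to the construction of the shear invariants in \cite{BonDre1} together with the fact that Goldman's remaining coordinates (the eigenvalue invariants, the internal parameters of adjacent pairs of pants, and the twist-bulge parameters at other curves $C_\ell$) parametrize deformations that preserve these four flags up to a common change of projective frame.
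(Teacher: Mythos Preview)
Your first two paragraphs are essentially the paper's proof: the paper says the first pair of formulas is ``an immediate consequence of Lemma~\ref{lem:TwistBulge}'', and the second pair follows by solving the resulting linear system. That is exactly what you do.

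Your third paragraph, however, aims at something the proposition does not assert. Recall that Goldman's $(u_i,v_i)$ are themselves only defined up to a translation of $\R^2$; the paper says this explicitly, and the proposition's caveat ``up to translation by a vector $(u_i^0,v_i^0)$ depending on the choices made in Goldman's construction'' is precisely to absorb that ambiguity. Given this, the only content of the proposition is the affine relationship along each twist--bulge orbit, which is exactly what your first two paragraphs (equivalently, Lemma~\ref{lem:TwistBulge}) establish. There is no need to show that a \emph{single} offset $(u_i^0,v_i^0)$ works across all of $\mathfrak P(S)$; indeed, without a specific normalisation of Goldman's $(u_i,v_i)$ this question is not even well posed. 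Your proposed locality argument is therefore unnecessary here, and as stated it would need substantially more work: the invariants $\sigma_1(C_i),\sigma_2(C_i)$ certainly depend on more than $(u_i,v_i)$ (for instance on the eigenvalue data at $C_i$ and on the adjacent pairs of pants through the flags $G_i,H_i$), so the claim you would actually need is that the \emph{differences} $\sigma_1(C_i)-u_i+3v_i$ and $\sigma_2(C_i)-u_i-3v_i$ are independent of the remaining Goldman coordinates, which is a different and more delicate statement.
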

\begin{proof}
The first set of equations is an immediate consequence of Lemma~\ref{lem:TwistBulge}. A straightforward computation then gives the second set.
\end{proof}

This completes our description of the correspondence between the coordinates of \cite{Goldman} for $\mathfrak P(S)$ and those of \cite{BonDre1}.

\end{document}